\documentclass[12pt]{amsart}
\usepackage{amsmath}
\usepackage{amssymb}
\usepackage{amscd}
\usepackage{pstricks}
\usepackage{color}
\usepackage{mathpazo}
\usepackage[dvips]{graphicx}
\usepackage{textcomp}

\theoremstyle{plain}
\newtheorem{theorem}{Theorem}
\newtheorem{lemma}{Lemma}

\newtheorem{corollary}[lemma]{Corollary}

\def\Prob{{\mathbb{P}}}

\title{Weak convergence of random walks, conditioned to stay away}

\author{Zsolt Pajor-Gyulai\and Domokos Sz\'asz}

\begin{document}
\maketitle

\begin{abstract}
Let $\{X_n\}_{n\in\mathbb{N}}$ be a sequence of i.i.d. random variables in $\mathbb{Z}^d$.
Let $S_k=X_1+...+X_k$ and $Y_n(t)$ be the continuous process on $[0,1]$ for which $Y_n(k/n)=S_k/\sqrt{n}$ $k=1,...,n$ and which is linearly interpolated elsewhere. The paper gives a generalization of results of Belkin, \cite{B72} on the weak limit laws of $Y_n(t)$ conditioned to stay away from some small sets. In particular, it is shown that the diffusive limit of the random walk meander on $\mathbb Z^d: d\ge 2$ is the Brownian motion.

\vskip5mm
Mathematics Subject Classification: 60J20, 60K15, 60K40.
\end{abstract}

\section{Introduction}
In his \cite{B70} paper, Belkin examined the asymptotic effect of conditioning on the asymptotic behavior of a random walk. In his case conditioning meant that the random walk was supposed to avoid a certain finite subset of $\mathbb{Z^d}$.  By using characteristic functions, he, for instance, showed that if - in one dimension - the original limit law is normal, then the conditioned walk approaches a two sided Rayleigh distribution. He also showed that - in two dimension - the conditioning has no effect on the limit law. Later, he strengthened his results by proving the corresponding weak invariance theorems (\cite{B72}). His method was, however, pretty technical. Bolthausen offered a more elegant technique when he asked what is the limit law - in the diffusive scaling - of a random walk of finite variance on $\mathbb Z$ conditioned to stay positive (\cite{B76}); he found that the limiting process is the so-called Brownian meander. An interesting consequence of our result is that, in dimension $d\ge 2$, the diffusive limit of the random walk meander is the Brownian motion.

The main goal of this paper is to prove that, in general, conditioning has no effect on the limit distribution if the forbidden subset has zero measure with respect to the unconditioned limiting distribution. Our method is based on Bolthausen's functional approach.

The key observation in his proof is that a random time, being not a stopping time, nevertheless behaves like a stopping time. With an appropriate modification of the definition of Bolthausen's stopping time his basic equation still remains valid,  cf. the Lemma \ref{thm:eq_distr} of this paper. It is worth noting that our proof is actually simpler than that of Bolthausen since, in particular, we also use the results in \cite{W70}.

Our motivation for treating this problem was that, in \cite{P-GySz10b} (cf. \cite{P-GySz10a}),
we needed a generalization of Corollary 2 (to continuous time random walks with internal states) for
describing the diffusive limit of a stochastic model of two Lorentz disks. Having made a research in the literature
we were surprised to learn that even for Corollary 1 we could not localize any reference.

\section{Notations and result}
Let $C^d[0,n]$ be the set of continuous functions from the interval $[0,n]$ to $\mathbb{R}^d$ and let $\rho_n$ be the usual supremum metric on $C^d[0,n]$:
\[
\rho_n(f,g)=\sup_{0\leq t\leq n}|f(t)-g(t)|.
\]
We will also use the space $C^d[0,\infty)$ endowed with the metric
\[
\rho(f,g)=\sum_{n=1}^{\infty}2^{-n}\frac{\rho_n(f,g)}{1+\rho_n(f,g)}.
\]
It was shown by Whitt (\cite{W70}) that convergence (of the natural projections) in $(C^d[0,n],\rho_n)$ for every $n$ and convergence in $(C^d[0,\infty),\rho)$ are equivalent.

Let $\{X_n\}_{n\in\mathbb{N}}$ be a sequence of i.i.d. random variables in $\mathbb{Z}^d$. The random walk generated by the partial sums is $S_k=X_1+...+X_k$ and denote by $S^{int}_t$ its continuous, linearly interpolated trajectory. Denote finally by $Y_n(t)= S^{int}_t/\sqrt{n}\ ( t \in [0,1])$ its diffusively scaled variant.  Let $Q_n(B)=\Prob(Y_n\in B)$ for every Borel subset $B$ of $C^d[0,1]$.

Let $A$ be a linear subspace of $\mathbb{R}^d$ and
\[
\tilde{C}_A=\{f\in C^d[0,1]| f(t)\notin A\quad t\in(0,1]\}.
\]
Also define the conditioned process $\tilde{Y}_n$ by
\[
\Prob(\tilde{Y}_n\in B)=Q_n(B|\tilde{C}_A).
\]
We will need the above processes extended to the whole half line, too. Define  $\overline{Y}_n$ as the continuous process for which $\overline{Y}_n(k/n)=S_k/\sqrt{n}$ for $k\in\mathbb{N}$ and linearly interpolated elsewhere. Note $Y_n=\overline{Y}_n|_{[0,1]}$. Let $\overline{Q}_n(B)=\Prob(\overline{Y}_n\in B)$ for every Borel subset $B$ of $C^d[0,\infty)$ and let $\Pi_1$ denote the natural projection $C^d[0,\infty) \to C^d[0,1]$. Note that $Q_n=\overline{Q}_n\Pi_1^{-1}$.

Assume that $\overline{Y}_n\Rightarrow \overline{Y}_{\infty}$ in $(C^d[0,\infty),\rho)$ and let $\overline{P}(B)=\Prob(\overline{Y}_{\infty}\in B)$. This implies that $Y_n\Rightarrow Y_{\infty}=\overline{Y}_{\infty}|_{[0,1]}$, so let $P=\overline{P}\Pi_{1}^{-1}$ be the measure generated by $Y_{\infty}$.  Our result is

\begin{theorem}\label{thm:negligible}
If  $P(\tilde{C}_A)=1$, then $\tilde{Y}_n\Rightarrow Y_{\infty}$ in $(C^d[0,1],\rho_1)$.
\end{theorem}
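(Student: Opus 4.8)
The plan is to verify the defining convergence directly: for every bounded Lipschitz functional $F\colon C^d[0,1]\to\reals$ I will show $\EXP[F(\tilde{Y}_n)]\to\EXP[F(Y_\infty)]$, where $\EXP[F(\tilde{Y}_n)]=\EXP[F(Y_n);\,Y_n\in\tilde{C}_A]/Q_n(\tilde{C}_A)$. The essential difficulty is that $Q_n(\tilde{C}_A)$ may tend to $0$ (for the recurrent two–dimensional walk avoiding the origin it decays like $1/\log n$), so $\tilde{C}_A$ is \emph{not} a $P$–continuity set and one cannot simply invoke the portmanteau theorem. All of the loss of mass comes from the behaviour near $t=0$, where the rescaled trajectory sits close to $A$ and its genuinely discrete returns to $A$ have no counterpart for the limit $Y_\infty$, which by hypothesis leaves $A$ instantaneously and never returns. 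The whole proof is organized around isolating this near–zero effect and showing that it factors out.

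First I would relax the constraint away from the origin. For $\delta\in(0,1)$ put $\tilde{C}_A^{\delta}=\{f\in C^d[0,1]\colon f(t)\notin A,\ t\in[\delta,1]\}\supseteq\tilde{C}_A$. Since $P(\tilde{C}_A)=1$ forces $A$ to have codimension at least $2$ (otherwise $Y_\infty$ would hit, indeed cross, $A$ at arbitrarily small times with positive probability), $Y_\infty$ a.s.\ avoids $A$ on the compact interval $[\delta,1]$; hence $P(\tilde{C}_A^{\delta})=1$, and because the constraint is bounded away from $t=0$ the set $\tilde{C}_A^{\delta}$ is a $P$–continuity set. Consequently $Q_n(\tilde{C}_A^{\delta})\to1$ and $\EXP[F(Y_n)\mid\tilde{C}_A^{\delta}]\to\EXP[F(Y_\infty)]$ by the portmanteau theorem applied to $F\,\one_{\tilde{C}_A^{\delta}}$. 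Here the invariance principle on every finite horizon — available through Whitt's equivalence between convergence in $(C^d[0,\infty),\rho)$ and in each $(C^d[0,n],\rho_n)$ — is what lets me treat $[\delta,1]$ and, after time shifts, longer windows on the same footing. It remains to compare conditioning on $\tilde{C}_A$ with conditioning on $\tilde{C}_A^{\delta}$.

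For this comparison I would condition at the fixed time $\delta n$ and use the Markov property. Writing $y=S_{\delta n}$, the event $\{\text{avoid }A\text{ on }(0,\delta]\}$ is measurable with respect to the pre–$\delta$ increments, while $\{\text{avoid }A\text{ on }[\delta,1]\}$ and — up to a modulus–of–continuity error vanishing as $\delta\to0$ by tightness of $\{Y_n\}$ — the functional $F$ depend only on the post–$\delta$ increments; given $y$ these two pieces are independent. Thus both $\EXP[F(Y_n)\mid\tilde{C}_A]$ and $\EXP[F(Y_n)\mid\tilde{C}_A^{\delta}]$ are ratios of the same integrals against the law of $y$, the only discrepancy being an extra weight $a_n(y)=\Prob(\text{avoid }A\text{ on }(0,\delta]\mid S_{\delta n}=y)$ in the former. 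The key claim — which I take to be the content of the modified Bolthausen identity announced in the introduction — is that, after rescaling $y=\sqrt{n}\,v$, the factor $a_n(y)$ is asymptotically independent of $v$, uniformly for $v$ in compact subsets of $A^{c}$: near $t=0$ the bridge from $0$ to the distant endpoint $\sqrt{n}\,v$ is locally indistinguishable from the free walk started at $0$, so the avoidance weight decouples from the endpoint and from the macroscopic part of the path. Granting this, $a_n(y)$ cancels between numerator and denominator up to $o(1)$, whence $\EXP[F(Y_n)\mid\tilde{C}_A]-\EXP[F(Y_n)\mid\tilde{C}_A^{\delta}]\to0$.

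Combining the two comparisons gives $\limsup_n\bigl|\EXP[F(\tilde{Y}_n)]-\EXP[F(Y_\infty)]\bigr|\le\omega(\delta)$ with $\omega(\delta)\to0$, and letting $\delta\to0$ finishes the proof; tightness of $\{\tilde{Y}_n\}$, including continuity at $t=0$, comes as a by-product, since the conditioned law agrees asymptotically with the tight unconditioned law on each $[\delta,1]$ while the $[0,\delta]$ oscillation is controlled uniformly. The main obstacle is exactly the decoupling claim of the third paragraph: converting the limit-level statement ``$Y_\infty$ avoids $A$'' into a \emph{uniform, endpoint-independent} estimate on the discrete near-origin avoidance probability $a_n(\cdot)$. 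This is genuinely delicate precisely because the walk converges to Brownian motion non-uniformly near $t=0$, and it is where a random time that is not a stopping time but nonetheless obeys a stopping-time–type distributional identity, combined with a local–CLT and tightness control, must be invoked to make the factorization rigorous.
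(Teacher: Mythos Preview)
Your proposal has a genuine gap, and it misreads what the ``modified Bolthausen identity'' is.

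The gap is the decoupling claim in your third paragraph: that the bridge avoidance weight $a_n(\sqrt{n}\,v)=\Prob(\text{avoid }A\text{ on }(0,\delta]\mid S_{\delta n}=\sqrt{n}\,v)$ is asymptotically independent of $v$, uniformly on compacta of $A^c$. You do not prove this, and you yourself concede it is ``genuinely delicate.'' It is: establishing it would require a local CLT for the bridge near $t=0$ together with a uniform potential-theoretic estimate on how the avoidance probability depends on the endpoint, and one would still have to control the event that $S_{\delta n}/\sqrt n$ lands near $A$, where the uniformity breaks down. Nothing in the bare hypothesis $P(\tilde C_A)=1$ supplies such an estimate, and the paper proves nothing of this kind.

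More to the point, Lemma~\ref{thm:eq_distr} is not a factorization statement at all. The paper does not split at a \emph{fixed} time $\delta n$ and then cancel a near-origin weight. It introduces the \emph{random} time $T_n=\inf\{k:S_k\in A,\ S^{int}_{k+t}\notin A\text{ for all }t\in(0,n]\}$ --- the last visit to $A$ preceding a successful avoidance run of length $n$ --- and uses the fact that $A\cap\mathbb Z^d$ is a sublattice to conclude that the post-$T_n$ increments $(S_{T_n+k}-S_{T_n})_{k\le n}$ have \emph{exactly} the law of $(S_k)_{k\le n}$ conditioned on $\tilde C_A$. This is an exact identity, not an asymptotic one: it rewrites the conditioned measure as $Q_{n,0}(\,\cdot\mid\tilde C_A)=\overline Q_{n,0}\Phi^{-1}$ for the map $\Phi(f)(t)=f(T(f)+t)-f(T(f))$, where $T$ is the analogous functional on $C_0^d[0,\infty)$. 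Since $P(\tilde C_A)=1$ forces $T=0$ $\overline P_0$-a.s., $\Phi$ is $\overline P_0$-a.e.\ continuous and equals restriction to $[0,1]$, so the continuous mapping theorem gives $\overline Q_{n,0}\Phi^{-1}\Rightarrow\overline P_0\Phi^{-1}=P_0$ in one step. No $\delta\to0$, no endpoint uniformity, no local CLT. The random time you allude to in your last sentence is not a tool for proving your factorization claim --- it \emph{replaces} the need for any such claim by absorbing the near-origin difficulty exactly.
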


Consider the closed subspace $C_0^d[0,\infty)$ which consists of the continuous functions in $C^d[0,\infty)$ with $f(0)=0$. Let $\overline{Q}_{n,0}$ and $\overline{P}_0$ denote the corresponding restricted measures, i.e. for $B_0\in\mathcal{B}(C_0^d[0,\infty))$
\[
\overline{Q}_{n,0}(B_0)=\overline{Q}_n(B_0).
\]
Also introduce $Q_{n,0}$ and $P_0$ in the same fashion. Since $C_0^d[0,\infty)$ is the support of both $\overline{Q}_n$ and $\overline{P}$,
\[
\overline{Q}_n(B)=\overline{Q}_{n,0}(B\cap C_0^d[0,\infty))
\]
and similarly for $\overline{P}$. Thus it suffices to conduct the proof using this smaller space. The reason for doing so will  become apparent in the next section. $\Pi_{1,0}$ will denote the natural projection from $C_0^d[0,\infty)$ to $C_0^d[0,1]$, i. e. $\Pi_{1,0}=\Pi_1|_{C_0^d[0,\infty)}$.

We conclude this section with some corollaries of Theorem \ref{thm:negligible}.

\begin{corollary}
In  dimension $d\ge 2$, a zero mean, finite variance random walker whose interpolated trajectory is conditioned to avoid returning to the origin converges weakly to a $d$-dimensional Brownian motion.
\end{corollary}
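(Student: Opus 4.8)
The plan is to apply Theorem \ref{thm:negligible} with the (trivial) linear subspace $A=\{0\}$, the origin of $\mathbb{R}^d$. With this choice, $\tilde{C}_A=\{f\in C^d[0,1]\mid f(t)\neq 0,\ t\in(0,1]\}$, so the conditioned process $\tilde{Y}_n$ is precisely the diffusively scaled interpolated walk conditioned on the event that its trajectory avoids the origin on $(0,1]$, which is exactly the conditioning described in the statement. It therefore suffices to verify the two hypotheses feeding into the theorem: that $\overline{Y}_n\Rightarrow\overline{Y}_\infty$ for a suitable limit, and that $P(\tilde{C}_A)=1$.

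For the first hypothesis I would invoke the multidimensional functional central limit theorem (Donsker's invariance principle): since the increments $X_n$ are i.i.d. with zero mean and finite (nondegenerate) covariance, the restrictions $\overline{Y}_n|_{[0,N]}$ converge weakly in each $(C^d[0,N],\rho_N)$ to a $d$-dimensional Brownian motion $\overline{Y}_\infty$ whose covariance is that of $X_1$. By Whitt's equivalence quoted in Section 2, convergence in every $(C^d[0,N],\rho_N)$ is the same as convergence in $(C^d[0,\infty),\rho)$, so indeed $\overline{Y}_n\Rightarrow\overline{Y}_\infty$, and the limiting process $Y_\infty=\overline{Y}_\infty|_{[0,1]}$ is a Brownian motion on $[0,1]$.

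The crux is the second hypothesis, $P(\tilde{C}_A)=1$, i.e. that a $d$-dimensional Brownian motion started at $0$ almost surely does not return to the origin at any time in $(0,1]$. This is precisely the classical statement that a single point is polar for Brownian motion in dimension $d\ge 2$: for $d\ge 2$ one has $\Prob(\exists\, t\in(0,1]:Y_\infty(t)=0)=0$, so with probability one the path avoids the fixed point $0$ for all positive times. (This can be seen from the transience-type Green's function estimates, or from the vanishing of the expected occupation density at a point together with a last-exit decomposition.) It is exactly here that $d\ge 2$ is used; in $d=1$ the point $0$ is non-polar and the condition fails. Recording this fact gives $P(\tilde{C}_A)=1$.

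With both hypotheses in hand, Theorem \ref{thm:negligible} yields $\tilde{Y}_n\Rightarrow Y_\infty$ in $(C^d[0,1],\rho_1)$, and since $Y_\infty$ is a $d$-dimensional Brownian motion the corollary follows. I expect the only genuinely nontrivial ingredient to be the polarity of points in $d\ge 2$; the remainder is bookkeeping, namely matching the conditioning event to $\tilde{C}_A$ and assembling Donsker's theorem with Whitt's equivalence to produce the required weak convergence $\overline{Y}_n\Rightarrow\overline{Y}_\infty$.
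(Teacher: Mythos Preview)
Your proposal is correct and matches the paper's intended approach: the corollary is stated without a separate proof, being an immediate application of Theorem~\ref{thm:negligible} with $A=\{0\}$, Donsker's invariance principle for the unconditional limit, and the polarity of points for Brownian motion in $d\ge 2$ to guarantee $P(\tilde C_A)=1$. You have filled in exactly the details the paper leaves implicit.
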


The reader might note that in the above corollary jumps like $(-1,-1)\to (1,1)$ are not allowed, a fortuitous consequence of interpolation. (For instance, one can change the interpolation so that when the interpolated trajectory were to hit the origin, it goes around it on a circle of infinitesimal radius. This clearly does not change the limiting process and does not effect the validity of the proof of Theorem \ref{thm:negligible}.) The natural way is to take conditioning on the random walk (the random walk meander) and then to take our favorite interpolation. Thus we have

\begin{corollary}
In dimension $d\ge 2$, the limit (in the diffusive scaling) of a zero mean, finite variance random walk, conditioned to avoid returning to the origin, converges weakly to a $d$-dimensional Brownian motion.
\end{corollary}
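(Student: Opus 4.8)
The plan is to recognize the meander as the instance $A=\{0\}$ of Theorem \ref{thm:negligible}; the point $\{0\}$ is a (zero-dimensional) linear subspace, so the theorem is formally applicable once its hypotheses are checked. Two inputs are needed. First, since the increments are centered with finite and (I will assume) nondegenerate covariance, Donsker's theorem combined with Whitt's equivalence gives $\overline Y_n\Rightarrow\overline Y_\infty$ with $Y_\infty$ the $d$-dimensional Brownian motion, so the standing hypothesis of Theorem \ref{thm:negligible} holds and $Y_\infty$ is the candidate limit. Second, I would verify $P(\tilde C_{\{0\}})=1$, i.e. that $Y_\infty$ avoids the origin on $(0,1]$ almost surely. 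This is the polarity of points: for each $\eps>0$ the Gaussian vector $Y_\infty(\eps)$ is almost surely nonzero, and a $d$-dimensional Brownian motion with $d\ge2$ started away from the origin never reaches it; letting $\eps\downarrow0$ yields $\Prob(Y_\infty(t)\neq0\ \forall\,t\in(0,1])=1$. This is the only place the restriction $d\ge2$ is used.

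The substantive step is to match the discrete conditioning defining the meander, namely $\{S_k\neq0,\ 1\le k\le n\}$, with the continuous-path event $\tilde C_{\{0\}}$. Following the remark preceding the statement, I would interpolate the walk not by straight segments but by a path that, on any interval $[k/n,(k+1)/n]$ whose straight segment from $S_k/\sqrt n$ to $S_{k+1}/\sqrt n$ meets the origin, detours around $0$ along an arc of radius $r_n\to0$. With this rule the continuous trajectory avoids the origin on $(0,1]$ \emph{exactly} when $S_k\neq0$ for all $1\le k\le n$, so conditioning the walk on the meander event becomes literally the conditioning on $\{Y_n\in\tilde C_{\{0\}}\}$ to which Theorem \ref{thm:negligible} refers. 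Because the modification is purely local and does not touch the skeleton $\{S_k/\sqrt n\}_{k}$, the random-time construction and the basic identity of Lemma \ref{thm:eq_distr} that drive the proof are unchanged, so the theorem applies and the detour-interpolated meander converges weakly to $Y_\infty$.

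Finally I would pass to an arbitrary (``favorite'') interpolation. The detour path and, say, the linear interpolation of the same conditioned skeleton coincide at all times $k/n$ and, on each interval, differ by at most a constant multiple of $r_n$; hence $\rho_1$ between them is at most $C r_n\to0$ \emph{deterministically}, and the converging-together lemma transfers the weak limit $Y_\infty$ to the favorite-interpolation meander. I expect the genuine obstacle to be the verification, only asserted in the remark, that the proof of Theorem \ref{thm:negligible}---in particular the stopping-time argument behind Lemma \ref{thm:eq_distr}---is unaffected by replacing linear interpolation with the origin-avoiding one; the conceptual crux, by contrast, is the polarity input, which fails for $d=1$, where points are not polar and, as Belkin showed, the conditioning genuinely alters the limit, producing the two-sided Rayleigh law rather than Brownian motion.
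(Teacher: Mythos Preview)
Your proposal is correct and follows exactly the paper's route: apply Theorem~\ref{thm:negligible} with $A=\{0\}$, repair the mismatch between the discrete meander event $\{S_k\neq0,\ 1\le k\le n\}$ and the continuous event $\tilde C_{\{0\}}$ via the origin-avoiding detour interpolation that the paper itself suggests, and then pass to an arbitrary interpolation. You supply more detail than the paper---the polarity argument for $P(\tilde C_{\{0\}})=1$, the explicit $\rho_1$-comparison of interpolations, and the caveat about Lemma~\ref{thm:eq_distr}---but the structure is identical.
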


Obviously, the previous trick works when $dim(A)\leq d-1$. We state our last two corollaries in this spirit.
\begin{corollary}
We also have that - for $d\ge 2$ - two independent, zero mean, finite variance random walkers both starting from the origin and conditioned not to meet after they depart will converge to the product of their independent limits. To see this, let $S_n^{(1)}$, $S_n^{(2)}$ denote the two independent random walkers, while $S_n=(S_n^{(1)}, S_n^{(2)})$ stands for the $2d$-dimensional composite walk. Now apply Theorem \ref{thm:negligible} and  Donsker's theorem (more precisely its multidimensional generalization) to $S_n$ with e.g. $A=\{(x,y,w,z)|x=w, y=z\}$ in $d=2$.
\end{corollary}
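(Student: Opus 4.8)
The plan is to treat the composite walk as a single random walk in $\reals^{2d}$ and simply invoke Theorem \ref{thm:negligible}; essentially all the work is in verifying its hypothesis, not in reproving it. First I would observe that since $S^{(1)}$ and $S^{(2)}$ are independent, the joint increments $(X^{(1)}_k,X^{(2)}_k)$ are i.i.d.\ in $\integers^{2d}$ with zero mean and finite covariance, so $S_n=(S^{(1)}_n,S^{(2)}_n)$ is itself a zero-mean, finite-variance random walk to which the multidimensional Donsker invariance principle applies. Hence the diffusively scaled, interpolated composite trajectory satisfies $\overline{Y}_n\Rightarrow\overline{Y}_\infty$ in $(C^{2d}[0,\infty),\rho)$, where $\overline{Y}_\infty$ is a $2d$-dimensional Brownian motion; this is precisely the standing assumption required by Theorem \ref{thm:negligible}. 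Moreover, because the covariance of $(X^{(1)}_k,X^{(2)}_k)$ is block diagonal, the limiting Gaussian process factorizes as $Y_\infty=(B^{(1)},B^{(2)})$ with $B^{(1)},B^{(2)}$ independent $d$-dimensional Brownian motions, i.e.\ $Y_\infty$ is exactly the product of the two individual Donsker limits.

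Next I would identify the conditioning with the event $\tilde{C}_A$. Two walkers meet at step $k$ precisely when $S^{(1)}_k=S^{(2)}_k$, i.e.\ when the composite walk visits the diagonal $A=\{(u,v)\in\reals^{2d}:u=v\}$, which for $d=2$ is the stated subspace $\{(x,y,w,z):x=w,\ y=z\}$ of dimension $d=2$ and codimension $d=2$. Conditioning the walkers not to meet after they depart is therefore conditioning the composite trajectory to avoid $A$ on $(0,1]$; by the interpolation remark preceding Corollary 1 (choosing the interpolation that skirts $A$, which is legitimate since $\dim A=d\le 2d-1$) this is the event $\tilde{C}_A$ for the composite walk, so the conditioned process is exactly the $\tilde{Y}_n$ of the theorem.

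The one substantive step, and the only place the hypothesis $d\ge 2$ enters, is checking $P(\tilde{C}_A)=1$. I would reduce this to a statement about a single Brownian motion: $Y_\infty(t)\in A$ if and only if $B^{(1)}(t)-B^{(2)}(t)=0$, and $W:=B^{(1)}-B^{(2)}$ is, up to a deterministic scalar, a $d$-dimensional Brownian motion started at the origin. For $d\ge 2$ a single point is polar for Brownian motion, so $W$ almost surely never returns to $0$ at any positive time; hence $Y_\infty(t)\notin A$ for all $t\in(0,1]$ almost surely, which is exactly $P(\tilde{C}_A)=1$. This is where a hyperplane (codimension one) would fail, and it is the reason the argument needs $d\ge 2$ rather than merely $\dim A\le 2d-1$.

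With the hypothesis verified, Theorem \ref{thm:negligible} applies verbatim to the composite walk and yields $\tilde{Y}_n\Rightarrow Y_\infty$ in $(C^{2d}[0,1],\rho_1)$. Since $Y_\infty=(B^{(1)},B^{(2)})$ is the product of the independent limits of the two walkers, this is precisely the assertion of the corollary. I expect the polarity-of-points computation to be the ``hard part'' only in a bookkeeping sense, as it is classical; the real content is the reduction that lets the general Theorem \ref{thm:negligible} absorb the conditioning, with Donsker's theorem supplying the limit and its product structure.
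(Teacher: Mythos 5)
Your proposal is correct and follows essentially the same route as the paper: form the $2d$-dimensional composite walk, invoke the multidimensional Donsker theorem for the hypothesis $\overline{Y}_n\Rightarrow\overline{Y}_\infty$, take $A$ to be the diagonal subspace, and apply Theorem \ref{thm:negligible}. Your only addition is to spell out the verification of $P(\tilde{C}_A)=1$ via polarity of the origin for the difference process $B^{(1)}-B^{(2)}$ (a step the paper leaves implicit, and which tacitly assumes nondegenerate covariances so that the difference is a genuine $d$-dimensional Brownian motion), which is a welcome clarification rather than a departure.
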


\begin{corollary}
Consider a $d(\ge2)$-dimensional random walk with zero mean and finite variance and let $A'$ be the subset of any $(d-2)$-dimensional subspace. By Donsker's theorem again, the unconditional limit is a $d$-dimensional Brownian motion. By applying Theorem \ref{thm:negligible} and the previous observation to the linear span of $A'$, we see that the conditional limit is the same.
\end{corollary}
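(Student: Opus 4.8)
The plan is to deduce the statement directly from Theorem \ref{thm:negligible}, the only real work being the verification of its hypothesis. Set $A=\mathrm{span}(A')$, a linear subspace of $\reals^d$ with $\dim A\le d-2$, and recall that by Donsker's theorem the unconditional diffusive limit $Y_\infty$ is a nondegenerate $d$-dimensional Brownian motion with law $P$. Since $A'\subseteq A$ we have $\tilde{C}_A\subseteq\tilde{C}_{A'}$, so once we know $P(\tilde{C}_A)=1$ it follows that $P(\tilde{C}_{A'})=1$ as well; and because $\dim A'\le d-2\le d-1$, the interpolation observation preceding this corollary lets us choose the continuous interpolation so that the trajectory meets $A'$ only if the lattice walk does, whence the conditioning of the continuous process is exactly the one to which Theorem \ref{thm:negligible} applies. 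Thus everything reduces to showing that the limiting Brownian motion almost surely avoids the subspace $A$ at all times $t\in(0,1]$.

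For this I would project onto the orthogonal complement $A^\perp$, which has dimension $m=d-\dim A\ge 2$. Writing $\pi$ for the orthogonal projection, the process $\pi Y_\infty$ is again a Brownian motion in $\reals^m$, and it is nondegenerate because the covariance of $Y_\infty$ is nondegenerate on all of $\reals^d$, so its image under $\pi$ has full rank $m$. Now $Y_\infty(t)\in A$ if and only if $\pi Y_\infty(t)=0$, so $\{Y_\infty\in\tilde{C}_A\}$ coincides with the event that the $m$-dimensional motion $\pi Y_\infty$, started at the origin, never returns to the origin on $(0,1]$. Since points are polar for Brownian motion in dimension $m\ge 2$ — i.e. $\{t>0:\pi Y_\infty(t)=0\}$ is almost surely empty — we obtain $P(\tilde{C}_A)=1$, and Theorem \ref{thm:negligible} then yields $\tilde{Y}_n\Rightarrow Y_\infty$, a $d$-dimensional Brownian motion.

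The main obstacle is precisely this last polarity statement, together with the fact that the exceptional time $t=0$ (where $Y_\infty(0)=0\in A$) must be excluded: one needs that the motion, starting on $A$, leaves it instantaneously and never comes back over the whole positive interval, not merely at a single fixed time. For $m\ge 2$ this is the classical fact that a fixed point (here the origin) is polar, obtainable either from a Green-function/capacity estimate or from the standard skew-product and time-reversal arguments, and the hypothesis $d\ge 2$, i.e. $m\ge 2$, is exactly where it enters. Two minor points should be checked in passing: that the projected covariance is of full rank $m$ (guaranteed by the nondegeneracy of $Y_\infty$), and that the chosen interpolation genuinely keeps the continuous path off the codimension-$\ge 2$ set $A'$ between lattice points, which is the content of the previously noted trick and is automatic once $\dim A'\le d-1$. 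I emphasize that one cannot shortcut this via a naive continuity-set argument for $\tilde{C}_{A'}$: in $d=2$ the recurrent walk meets a fixed codimension-$2$ point with probability tending to one, so $Q_n(\tilde{C}_{A'})$ need not converge to $P(\tilde{C}_{A'})=1$, and it is exactly this subtlety that is absorbed by Theorem \ref{thm:negligible}.
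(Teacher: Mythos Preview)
Your proof is correct and follows the same route as the paper's own argument: pass to the linear span $A=\mathrm{span}(A')$, invoke Donsker to identify the unconditional limit as Brownian motion, and then apply Theorem~\ref{thm:negligible}. The only difference is that you spell out the verification of the hypothesis $P(\tilde{C}_A)=1$ via projection onto $A^\perp$ and polarity of points for Brownian motion in dimension $\ge 2$, a step the paper leaves implicit.
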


\section{Proof}
We will prove the assertion by establishing a connection between $\{Y_n(t): t\in[0,1]\}$ and an appropriately chosen random segment of $\overline{Y}_n(t)$.

Define the functional $T: C_0^d[0,\infty)\to\mathbb{R}_+$ by
\[
T(f)=\inf\{t: f(t)\in A, f(u)\notin A,  t< u\leq t+1\}\quad (\inf\emptyset=\infty).
\]
Similarly to the analogous result in \cite{B76}, it is not hard to show that this functional is measurable. Then $\overline{P}_0(T=0)=\overline{P}_0\Pi_1^{-1}(\tilde{C}_A)=P(\tilde{C}_A)=1$ since $A$ contains the origin. It is not hard to see that the discontinuity set of $T$ in $C_0^d[0,\infty)$ is $D_T=T^{-1}(0,\infty)$ which has $\overline{P}_0$-measure 0.

Also define the mapping $\Phi: C_0^d[0,\infty)\to C_0^d[0,1]$ by
\[
(\Phi(f))(t)=f(T(f)+t)-f(T(f))
\]
and note that $\overline{P}_0(\Phi(f)=f|_{[0,1]})=1$ and that $\Phi$ is continuous $\overline{P}_0$-a.e.

Now turn to the walk and denote
\[
T_n=\inf\{k: S_k\in A,S^{int}_{k+t}\notin A\quad t\in (0,n]\}.
\]

Note that $\Prob(T_n<\infty)=1$ and set $Z_k=S_{T_n+k}-S_{T_n}$. The key element in the sequel is a
form of Bolthausen's equation, appropriate for our purpose. It says that - in the same way as in his case - though $T_n$ is not a stopping rule nevertheless it acts as a stopping rule.

\begin{lemma}\label{thm:eq_distr}
For each $B_1,...,B_n\in \mathcal{B}(\mathbb{R}^d)$,
\[
\Prob(S_k\in B_k, k=1,...,n| S_t^{int}\notin A, t\in(0,n])=\Prob(Z_k\in B_k, k=1,...,n)
\]
\end{lemma}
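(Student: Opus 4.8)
The plan is to decompose the probability according to the value of $T_n$ and to show that, although $T_n$ looks forward over a window of length $n$, the constraints defining $\{T_n=m\}$ split into one block measurable with respect to $X_1,\dots,X_m$ and one measurable with respect to the disjoint block $X_{m+1},\dots,X_{m+n}$. First I would record that the conditioning event on the left, $E:=\{S^{int}_t\notin A,\ t\in(0,n]\}$, is precisely $\{T_n=0\}$, because $S_0=0\in A$; hence the left-hand side is $\Prob(S_k\in B_k,\ k\le n\mid T_n=0)$, and $\Prob(E)>0$ is exactly what makes the conditioning meaningful. Writing $F_k:=\{S^{int}_{k+t}\notin A,\ t\in(0,n]\}$ and $G_k:=\{S_k\in A\}\cap F_k$, one has $T_n=\inf\{k:G_k\}$ and $\{T_n=m\}=G_m\cap\bigcap_{k<m}G_k^c$.

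The decisive observation -- the precise sense in which $T_n$ ``acts like a stopping rule'' -- is that on $\{S_m\in A\}$ every constraint $G_k^c$ with $m-n\le k\le m-1$ holds automatically: if $S_k\in A$ then $S^{int}_{k+(m-k)}=S_m\in A$ with $m-k\in(0,n]$, so $F_k$ fails. Hence, on $\{S_m\in A\}$,
\[
\{T_n=m\}=P_m\cap F_m,\qquad P_m:=\{S_m\in A\}\cap\bigcap_{k=0}^{m-n-1}G_k^c ,
\]
where each remaining constraint $G_k^c$ ($k\le m-n-1$) involves increments only up to $X_{k+n}$ with $k+n\le m-1$, so that $P_m\in\sigma(X_1,\dots,X_m)$. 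On $\{S_m\in A\}$, linearity of $A$ makes $F_m$ depend only on $X_{m+1},\dots,X_{m+n}$, and on $\{T_n=m\}$ one has $Z_k=S_{m+k}-S_m$, measurable with respect to the same future block. This disjointness is the crux that dissolves the apparent dependence introduced by the forward-looking window, and it is the step I expect to require the most care.

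The remainder is routine. By independence of disjoint blocks of i.i.d.\ increments,
\[
\Prob(T_n=m,\ Z_k\in B_k,\ k\le n)=\Prob(P_m)\,\Prob\bigl(F_m\cap\{S_{m+k}-S_m\in B_k,\ k\le n\}\bigr),
\]
and shift invariance identifies the second factor with $\Prob(E\cap\{S_k\in B_k,\ k\le n\})$, independent of $m$. Summing over $m$ and using $\Prob(T_n<\infty)=1$ gives $\sum_m\Prob(P_m)=1/\Prob(E)$ (take $B_k=\mathbb{R}^d$), whence
\[
\Prob(Z_k\in B_k,\ k\le n)=\frac{\Prob(E\cap\{S_k\in B_k,\ k\le n\})}{\Prob(E)}=\Prob(S_k\in B_k,\ k\le n\mid E),
\]
which is exactly the asserted identity. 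Aside from the separation step above, the only items to verify are the measurability of $T_n$ (granted in the text) and the careful bookkeeping of which increments each event depends on.
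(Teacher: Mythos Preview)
Your proof is correct and follows the same underlying idea as the paper's --- that the walk ``restarts'' upon hitting the sublattice $A\cap\mathbb{Z}^d$ --- but the paper offers only a two-line sketch while you carry out the block decomposition rigorously. Your decisive observation (that on $\{S_m\in A\}$ the constraints $G_k^c$ for $m-n\le k\le m-1$ hold automatically, so that $\{T_n=m\}$ factors as $P_m\cap F_m$ with $P_m\in\sigma(X_1,\dots,X_m)$) is precisely the content behind the paper's assertion that $T_n$ ``acts as a stopping rule'' despite its forward-looking definition.
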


\begin{proof}
Since $A$ is a subspace,  $A\cap\mathbb{Z}^d$ is a sublattice and the walk essentially starts over after hitting it. Thus, $[S_{T_n+k}-S_{T_n}, k=1,..,n]$ is independent of $S_{T_n}$ and of the past of the process and has the distribution of $[S_k, k=1,..,n|S_t^{int}\notin A , t\in(0,n]]$.
\end{proof}

By the above lemma,
\[
Q_{n,0}(.|\tilde{C}_A)=\overline{Q}_{n,0}\Pi_{1,0}^{-1}(.|\tilde{C}_A)=\overline{Q}_{n,0}\Phi^{-1}(.).
\]

By assumption, $\overline{Q}_{n,0}\Rightarrow \overline{P}_0$ so by the virtue of the continuous mapping theorem (Theorem 5.1 in \cite{B68}), this converges weakly to $\overline{P}_0\Phi^{-1}$ in $(C^d[0,1],\rho_1)$. This limit is nothing else but the measure generated by $\overline{Y}_{\infty}(T+.)-\overline{Y}_{\infty}(T)=Y_{\infty}(.)$ a.e. since $\overline{P}_0(T=0)=1$.

As it was mentioned, it is trivial to extend the result and show $Q_n(.|\tilde{C}_A)\Rightarrow P$.

\section{Remarks}
\begin{enumerate}
\item Although it does not follow from the above proof, it is clear on an intuitive basis that Theorem \ref{thm:negligible} should hold for a finite union of such subspaces.
\item Our result can be carried over easily to continuous time random walks. If $S_t$ is the position of the continuous time random walker, then $Y_n(t)=S_{nt}/{\sqrt{n}}$ and one should use the space $D[0,\infty)$ endowed with the Skorohod topology (see \cite{B68} and \cite{L73}). Also one must replace $\tilde{C}_A$ with
\[
\tilde{C}_{A}^{(cont)}=\{f\in D[0,1]| f(t)\notin A\quad t\in[\xi,1]\}
\]
where $\xi$ is the time of the first jump. Then without any difficulty (at least in principle), one can prove the result analogous to Theorem \ref{thm:negligible}.
\item The technique of this proof is a very powerful one. Suppose we have a conditioned measure on $C^d[0,1]$. Let $T$ denote the functional on $C^d[0,\infty]$ which gives the random time when the condition first happens for $T+t, t\in[0,1]$. Also let $T_n$ denote the time after which the linearly interpolated walk divided by $\sqrt{n}$ satisfies the condition. If the analog of Lemma \ref{thm:eq_distr} can be proved, then the limiting process is $[Y_{\infty}(T+t)-Y_{\infty}(T), t\in[0,1]]$.
\end{enumerate}

\noindent{\bf Acknowledgement.} The authors are most indebted to P\'eter N\'andori for his valuable comments and criticism upon an earlier version. They express their sincere gratitude to Greg Lawler  who also suggested an
alternative idea for proving Corollary 1 and to Erwin Bolthausen and B\'alint T\'oth for their useful
remarks.

\end{document}